\swapnumbers \numberwithin{equation}{section}
\theoremstyle{plain}
\newtheorem{thm}{Theorem}[section]
\newtheorem{conjec}[thm]{Conjecture}
\newtheorem{prop}[thm]{Proposition}
\theoremstyle{definition}
 \newcommand{\Wi}{\widetilde}
\def\C{{\mathbb C}}
\def\Z{{\mathbb Z}}
\def\Q{{\mathbb Q}}
\def\N{{\mathbb N}}
\def\1{\hbox{\rm\rlap {1}\hskip.03in{\rom I}}}
\def\Bbbone{{\rm1\mathchoice{\kern-0.25em}{\kern-0.25em}
{\kern-0.2em}{\kern-0.2em}I}}
\long\def\forget#1\forgotten{} %
\newcommand\ver[1]{\marginpar{\tiny Changed in Ver \VER}}
\date{\today}
\begin{document}

\title[Burghelea Conjecture]{On Burghelea Conjecture }

\author[A.~Dranishnikov]{Alexander  Dranishnikov} %

\thanks{The author was supported by NSF DMS-1304627}

\begin{abstract}
We prove the Burghelea Conjecture for all groups with finite  cohomological dimension satisfying some additional condition on cohomology
of reduced normalizers.
\end{abstract}

\address{Alexander N. Dranishnikov, Department of Mathematics, University
of Florida, 358 Little Hall, Gainesville, FL 32611-8105, USA}
\email{dranish@math.ufl.edu}

\subjclass[2000]
{Primary 20G10; 
Secondary 16E40,  
55P50  
}

\maketitle

\section{Introduction}
We recall that for an algebra $A$ over a field $k$ (generally, over a commutative ring) $HH_*(A)$ and $HC_*(A)$
denote the Hochschild homology and the cyclic homology of $A$~\cite{L},\cite{R}. These two homology groups are connected by the
Connes exact sequence
$$
\dots\to HC_{n-1}(A) \to HH_n(A) \to HC_n(A)\stackrel{S}\longrightarrow HC_{n-2}(A) \to\dots .
$$
We use notation $PHC_*(A)$ for the periodic cyclic homology, $*=0,1$. 
The shift homomorphism $S$ in the Connes exact sequence defines the inverse sequence $\{HC_{*+2n},S\}_{n\in \N}$.
We note that for the periodic cyclic homology there is an epimorphism
$
PHC_*(A)\to\lim_{\leftarrow}\{HC_{*+2n}(A),S\}
$
with the $lim^1$-kernel. The kernel is trivial if the Mittag-Lefler condition is satisfied in particular when $HH_m(A)=0$ for all 
sufficiently large $m$~(see \cite{R} Corollary 6.1.23).

We consider the case when $k=\Q$ and $A$ is the group algebra $\Q G$ for a discrete group $G$. It was known~\cite{ML} 
well before the invention of the cyclic homology in early 80s (\cite{C},\cite{LQ},\cite{T}) that
$HH_n(\Q G)=H_n(G,\widetilde{\Q G})$ where $\widetilde{\Q G}$ is the $G$-module $\Q G$ defined 
by the $G$-action on $G$ by conjugation. Let $\langle G\rangle$ denote the set of conjugacy classes on $G$
and let $G_x$ denote the centralizer of $x\in[x]\in\langle G\rangle$. Clearly, $G_y\cong G_z$ for $y,z\in[x]$. Since 
$$\widetilde{\Q G}=\bigoplus_{[x]\in\langle G\rangle}\Q(G/G_x)=\bigoplus_{[x]\in\langle  G\rangle}\Q\otimes_{G_x}\Q G,$$
we obtain 
$$
HH_n(\Q G)=\bigoplus_{[x]\in\langle G\rangle}H_n(G,\Q\otimes_{G_x}\Q G).$$ By the Shapiro Lemma~\cite{Br},
$H_n(G,\Q\otimes_{G_x}\Q G)=H_n(G_x,\Q)$. Thus,
$$
HH_n(\Q G)=\bigoplus_{[x]\in\langle  G\rangle}H_n(G_x,\Q).$$

Burghelea have computed cyclic  homology of group algebras~\cite{Bu}
$$
HC_n(\Q G)=\bigoplus_{{\langle G\rangle}^{fin}}[H_*(N_x,\Q)\otimes 
H_*(\C P^\infty;\Q)]_n\oplus \bigoplus_{{\langle G\rangle}^{\infty}}H_n(N_x,\Q)
$$
where $N_x=G_x/\langle x\rangle$ is the reduced centralizer, ${\langle G\rangle}^{fin}$ is the subset of $\langle G\rangle$ of conjugacy classes of elements of finite order,
and ${\langle G\rangle}^{\infty}$ is the subset of conjugacy classes of elements of infinite order. This computation brought the following formula
for the periodic cyclic homology of $\Q G$~\cite{Bu}
$$
PHC_0(\Q G)=\bigoplus_{[x]\in{\langle G\rangle}^{fin},n\ge 0}H_{2n}(N_x,\Q)\oplus \bigoplus_{{\langle G\rangle}^{\infty}}T_0(x,\Q)
$$
$$
PHC_1(\Q G)=\bigoplus_{[x]\in{\langle G\rangle}^{fin}, n\ge 0}H_{2n+1}(N_x,\Q)\oplus \bigoplus_{{\langle G\rangle}^{\infty}}T_1(x,\Q).
$$
Here the group $T_*(x,\Q)$ fits in the following short exact sequence
$$
0\to{\lim_{\leftarrow}}^1\{H_{*-1+2n}(N_x,\Q)\}\to T_*(x,\Q)\to\lim_{\leftarrow}\{H_{*+2n}(N_x,\Q)\}\to 0
$$
where bonding maps in the inverse sequences are the Gysin homomorphisms $S:H_m(N_x,\Q)\to H_{m-2}(N_x,\Q)$ coming from the fibration
$S^1\to BG_x\to BN_x$.

In the end of his paper~\cite{Bu} Burghelea stated the following conjecture:
\begin{conjec}[Burghelea]
If $K(G,1)$ has the homotopy type of a finite CW-complex, then $T_*(x,\Q)=0$ for all $x\in{\langle G\rangle}^{\infty}$.
\end{conjec}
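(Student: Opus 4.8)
The plan is to compute $T_*(x,\Q)$ directly from the short exact sequence
$$0\to{\lim_{\leftarrow}}^1\{H_{*-1+2n}(N_x,\Q)\}\to T_*(x,\Q)\to\lim_{\leftarrow}\{H_{*+2n}(N_x,\Q)\}\to 0,$$
whose bonding maps are the Gysin homomorphisms $S\colon H_m(N_x,\Q)\to H_{m-2}(N_x,\Q)$. Since $x$ has infinite order, $\langle x\rangle\cong\Z$ lies in the centre of its own centralizer $G_x$, so the fibration $S^1\to BG_x\to BN_x$ arises from a central extension $1\to\Z\to G_x\to N_x\to 1$, and $S$ is cap product with the rational Euler class $e\in H^2(N_x,\Q)$. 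As $K(G,1)$ is a finite complex of dimension $n$, we have $\cd_\Q G\le n$, and since $G_x$ is a subgroup, $\cd_\Q G_x\le n$ as well; concretely, $G_x$ acts freely on the $n$-dimensional contractible universal cover $\widetilde{K(G,1)}$, so $H_m(G_x,\Q)=0$ for $m>n$.

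Feeding this into the homology Gysin sequence
$$\cdots\to H_m(G_x,\Q)\to H_m(N_x,\Q)\stackrel{S}{\longrightarrow} H_{m-2}(N_x,\Q)\to H_{m-1}(G_x,\Q)\to\cdots,$$
the vanishing of $H_m(G_x,\Q)$ and $H_{m-1}(G_x,\Q)$ for $m\ge n+2$ forces $S$ to be an isomorphism in that range. Hence both inverse systems above have bonding maps that are eventually isomorphisms; such systems are Mittag--Leffler, so the ${\lim_{\leftarrow}}^1$ term vanishes and $\lim_{\leftarrow}\{H_{*+2n}(N_x,\Q)\}$ equals the stable value of $H_m(N_x,\Q)$ for large $m\equiv *\pmod 2$. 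Thus the whole theorem reduces to the single assertion that $H_m(N_x,\Q)=0$ for all sufficiently large $m$, i.e. that the reduced centralizer $N_x$ has finite rational cohomological dimension.

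Establishing this vanishing is where the finiteness hypothesis must genuinely be used, and I expect it to be the main obstacle. There is a clean dichotomy governed by $e$. If $e=0$ rationally, then $S=0$, the Gysin sequence splits into $H_m(G_x,\Q)\cong H_m(N_x,\Q)\oplus H_{m-1}(N_x,\Q)$, and the vanishing of $H_*(G_x,\Q)$ above degree $n$ immediately gives $H_j(N_x,\Q)=0$ for $j\ge n$. If $e\ne 0$, then the cohomology Gysin sequence together with $\cd_\Q G_x\le n$ shows that $\cup e\colon H^m(N_x,\Q)\to H^{m+2}(N_x,\Q)$ is an isomorphism in all high degrees, so $H^*(N_x,\Q)$ is rationally $2$-periodic from some degree on; the difficulty is to prove that this periodic stable value is nevertheless zero, equivalently that $N_x$ carries no nontrivial rationally periodic cohomology. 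To attack this I would exploit that $N_x$ acts freely on the $n$-dimensional complex $Z=\widetilde{K(G,1)}/\langle x\rangle$, which is rationally a circle, so that $BG_x$ is rationally the total space of the $S^1$-fibration over $BN_x$ with Euler class $e$; ruling out the periodicity is then exactly the additional cohomological condition on reduced centralizers, and showing that a finite $K(G,1)$ forces that condition is the heart of the matter.
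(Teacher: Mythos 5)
The statement you were given is labeled in the paper as a \emph{conjecture}, and the paper does not prove it: the paper's actual result, Theorem~\ref{dm}, establishes the conclusion only under the additional hypothesis that every reduced centralizer $N_x$ satisfies $H_i(N_x;\Q)=0$ for infinitely many $i$ (together with $cd(G)<\infty$). Your reduction is correct and is essentially the standard one going back to Burghelea and Eckmann: $\langle x\rangle$ is central in $G_x$, the Gysin sequence of $S^1\to BG_x\to BN_x$ together with the vanishing of $H_m(G_x;\Q)$ above $\dim K(G,1)$ shows that $S$ is an isomorphism in all sufficiently high degrees, hence both inverse systems are Mittag--Leffler, the ${\lim_{\leftarrow}}^1$ term dies, and $T_*(x,\Q)$ is the stable value of $H_m(N_x;\Q)$. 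So the conjecture is equivalent to the eventual vanishing of $H_*(N_x;\Q)$, and your dichotomy on the rational Euler class correctly isolates the hard case: when $e\neq 0$ the rational (co)homology of $N_x$ is eventually $2$-periodic, and one must show the periodic value is zero.

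That last step, which you explicitly leave open, is a genuine gap --- and it is not one you could have closed by pushing harder on the Gysin sequence, because it is precisely the open content of the conjecture. What the paper proves instead (Theorem~\ref{main}) is the conditional statement that if in addition $H_i(N_x;\Q)=0$ for infinitely many $i$, then the periodic tail vanishes; and even this is obtained not from the Gysin sequence but from a free loop space argument: the inclusion of constant strings $B\pi\to S_0(B\pi)$ is a rational homology isomorphism (Theorem~\ref{strings}(a)), the map $BN_x\to S(BN_x)$ coming from the circle bundle $BG_x\to BN_x$ can be pushed into $S_0(BN_x^{(n-2)})$ because $BG_x$ is homotopy equivalent to a complex of dimension $cd(G_x)\le n-2$, and Theorem~\ref{strings}(b) then forces $H_n(BN_x;\Q)=0$ whenever $H_{n+1}(BN_x;\Q)=0$ with $n\ge cd(G_x)+2$. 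Your closing suggestion --- that $N_x$ acts on $\widetilde{K(G,1)}/\langle x\rangle$, which is rationally a circle --- is a reasonable starting point, but by itself it only reproduces the periodicity; it does not rule out a nontrivial periodic class. So your proposal is a correct and well-organized reduction, but it is not a proof of the statement, and neither the paper nor your argument establishes the conjecture in the generality stated.
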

Burghelea proved his conjecture for the fundamental groups of  Riemannian manifolds with nonpositive sectional curvature.
He also noticed that not every group satisfies the condition $T_*(x,\Q)=0$ for all $x\in{\langle G\rangle}^{\infty}$.
We say that a group $G$ satisfies the Burghelea Conjecture if it satisfies this condition
for all $x\in{\langle G\rangle}^{\infty}$. Eckmann proved~\cite{E} the Burghelea Conjecture
for groups $G$ with $hd_\Q G<\infty$ in the case when $G$ is nilpotent or torsion free solvable group. Also he proved the Burghelea Conjecture for
linear groups over fields of characteristic 0 and for groups with $cd_\Q G\le 2$. R. Ji~\cite{J} proved the Burghelea Conjecture for hyperbolic groups, arithmetic groups, and groups of polynomial growth.

In this paper we  prove the following theorem. 
\begin{thm}\label{dm}
Suppose that a group $G$ satisfies  $cd(G)<\infty$ and all reduced cetralizers $N_x$, $x\in G$, have property trivial rational homology $H_i(N_x;\Q)=0$
for infinitely many $i$. Then  $T_*(x,\Q)=0$ for all $x\in G$.
\end{thm}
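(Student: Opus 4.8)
The plan is to show that both terms in the short exact sequence defining $T_*(x,\Q)$ vanish, i.e. that the inverse system $\{H_{*+2n}(N_x,\Q),S\}$ has trivial limit and trivial $\lim^1$. Since $T_*(x,\Q)$ is only relevant for $x$ of infinite order, I may assume $\langle x\rangle\cong\Z$; as $x$ is central in its centralizer $G_x$, this gives a central extension $1\to\Z\to G_x\to N_x\to 1$ whose classifying fibration is exactly $S^1\to BG_x\to BN_x$. First I would record that the Gysin bonding map $S$ is, up to sign, the single nonzero differential $d^2\colon H_p(N_x,\Q)\to H_{p-2}(N_x,\Q)$ of the Lyndon--Hochschild--Serre spectral sequence of this extension, which collapses at $E^3$ because $H_q(\Z;\Q)=0$ for $q\ge 2$. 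Consequently $H_n(G_x,\Q)\cong\Ker\bigl(S\colon H_n(N_x,\Q)\to H_{n-2}(N_x,\Q)\bigr)\oplus\Coker\bigl(S\colon H_{n+1}(N_x,\Q)\to H_{n-1}(N_x,\Q)\bigr)$.

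The main step is to convert finite cohomological dimension into eventual periodicity of $H_*(N_x,\Q)$. Since $G_x$ is a subgroup of $G$ we have $cd(G_x)\le cd(G)=:d<\infty$, so $H_n(G_x,\Q)=0$ for $n>d$. Feeding this into the displayed decomposition, the kernel term forces $S$ to be injective on $H_n(N_x,\Q)$ for $n>d$, while the cokernel term forces $S\colon H_m(N_x,\Q)\to H_{m-2}(N_x,\Q)$ to be surjective for $m>d+1$; hence $S$ is an isomorphism in all degrees $>d+1$. In particular the inverse system $\{H_{*+2n}(N_x,\Q),S\}$ has eventually bijective bonding maps, so it is Mittag--Leffler and pro-isomorphic to its stabilized value $v:=H_{*+2N}(N_x,\Q)$ for $N\gg 0$. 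This already yields ${\lim_{\leftarrow}}^1\{H_{*-1+2n}(N_x,\Q)\}=0$ and $\lim_{\leftarrow}\{H_{*+2n}(N_x,\Q)\}\cong v$.

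It remains to show the stabilized value $v$ is zero, and this is where the hypothesis enters: because $S$ is an isomorphism past degree $d+1$, the groups $H_i(N_x,\Q)$ of each fixed parity are mutually isomorphic for all large $i$, so if $H_i(N_x,\Q)=0$ for infinitely many $i$ the corresponding stable value must vanish. Running this for the even- and odd-degree systems attached to $T_0$ and $T_1$ gives $v=0$ in both cases, whence $\lim_{\leftarrow}=0$ and $T_*(x,\Q)=0$.

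The hard part, and the genuine content, is the second paragraph: extracting eventual $2$-periodicity of the rational homology of the (possibly torsion-containing, hence of infinite cohomological dimension) group $N_x$ purely from $cd(G)<\infty$, via the vanishing of $H_*(G_x,\Q)$ in high degrees. The subtlety I would watch is the parity bookkeeping in the final paragraph: the hypothesis must be used to annihilate the stable value in \emph{both} parities, so the crux is verifying that the finite-cohomological-dimension constraint on the extension $1\to\Z\to G_x\to N_x\to1$ is strong enough to exclude a nonzero stabilized class surviving in a single parity.
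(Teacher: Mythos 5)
Your opening reductions are correct and unobjectionable: $cd(G_x)\le cd(G)=:d$, the Gysin/LHS spectral sequence of $1\to\Z\to G_x\to N_x\to 1$ gives $H_n(G_x;\Q)\cong\Ker\bigl(S\colon H_n(N_x;\Q)\to H_{n-2}(N_x;\Q)\bigr)\oplus\Coker\bigl(S\colon H_{n+1}(N_x;\Q)\to H_{n-1}(N_x;\Q)\bigr)$, and vanishing of $H_n(G_x;\Q)$ for $n>d$ makes $S$ an isomorphism in degrees $>d+1$; hence both towers are Mittag--Leffler, ${\lim_{\leftarrow}}^1=0$, and $T_0(x,\Q)$, $T_1(x,\Q)$ are identified with the stable values $v_0$, $v_1$ of the even and odd towers. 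The gap is the final step. The hypothesis gives $H_i(N_x;\Q)=0$ for infinitely many $i$, but those $i$ may all lie in a single parity, and once $H_*(G_x;\Q)$ vanishes above degree $d$ the Gysin sequence decouples the two parities completely: each tower is related only to itself, so nothing in your argument links $v_0$ to $v_1$. If all the vanishing degrees are even, you obtain $v_0=0$ and no information whatsoever about $v_1$; the configuration $H_{2k}(N_x;\Q)=0$ and $H_{2k+1}(N_x;\Q)\cong\Q$ for all large $k$, with $S$ an isomorphism on odd degrees, is consistent with every piece of input your proof uses. So the argument as written proves only that one of $T_0(x,\Q)$, $T_1(x,\Q)$ vanishes, not both.

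You flag exactly this point as ``the crux,'' but flagging it is not closing it, and it cannot be closed by the elementary means you employ: it is the entire content of the paper's Theorem~\ref{main}, whose proof occupies the body of the paper. There the author shows that $cd(G)<\infty$ together with vanishing of $H_n(H;\Q)$ in infinitely many degrees forces vanishing in all but finitely many degrees, by assuming $H_n(BH;\Q)\ne 0$ and $H_{n+1}(BH;\Q)=0$ for some $n\ge\max(cd(G)+2,5)$ --- precisely the single-parity survival configuration your argument leaves open --- and deriving a contradiction by topological, not homological-algebra, methods: the isomorphism $H_*(BH;\Q)\cong H_*(S_0(BH);\Q)$ for the string space (Theorem~\ref{strings}(a)), the compression of the $S^1$-bundle $BG\to BH$ into the skeleton $BH^{(n-2)}$ (possible since $cd(G)\le n-2$), which factors $j_H$ up to homotopy through $S_0(BH^{(n-2)})$, and the vanishing of the inclusion map $H_n(S_0(BH^{(n-2)});\Q)\to H_n(S_0(BH);\Q)$ (Theorem~\ref{strings}(b)), which together force $H_n(BH;\Q)=0$. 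Given that theorem, the paper's proof of the statement at hand is immediate and does not even need your periodicity bookkeeping: the towers are eventually zero, so both $\lim_{\leftarrow}$ and ${\lim_{\leftarrow}}^1$ vanish. In short, your proposal reproduces the easy reductions but omits the paper's main idea, and the omitted step is the theorem's genuine content rather than a parity subtlety one can bookkeep away.
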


Eckmann~\cite{E} (see also~\cite{Em})  noticed that the Hattori-Stallings rank map $$r:K_0(\Q G)\to HC_0(\Q G)=\Q G/[\Q G,\Q G]=\bigoplus_{\langle G\rangle}\Q$$ (see~\cite{Br} for the definition)
can be factored through the Connes-Karubi character $$ch^n:K_0(kG)\to HC_{2n}(kG), \ \ \ n\ge 0,$$ as $r=S^n\circ ch^n$ where $S^n=S\circ\cdots\circ S$ is the $n$ times iterated shift homomorphism. It follows from a result of Linnel~\cite{Li}, that the Hattori-Stallings rank takes value in $\Q\oplus\bigoplus_{\langle G\rangle^\infty}\Q$ where the first summand corresponds to the conjugacy class of the unit $e\in G$. Therefore, the Hattori-Stallings rank factors thorough $\Q\oplus\bigoplus_{\langle G\rangle^\infty}T_0(x,\Q)$.
Thus, for fixed $G$ the  Burghelea Conjecture  implies the Bass Conjecture~\cite{Ba}  on vanishing the terms 
of the image of the Hattori-Stallings map in $\bigoplus_{\langle G\rangle^\infty}\Q$. 

Note that in  it's turn the Bass Conjecture for $G$ implies 
the Idempotents Conjecture~\cite{Em}: {\em In the group algebra $\Q G$ for a torsion free group $G$ the equation $x^2=x$ has  only trivial solutions $x=0$ and $x=1$.}

\

The author is thankfull  to Alex Engel, Michal Marcinkowski, and Oscar Randal-Williams for pointing out on mistakes in the early versions
of the paper and giving valuable remarks.

\section{Free loop spaces}
Let $LX$ denote the free loop space on $X$, $LX=Map(S^1,X)$.
By $L_0X$ we denote the space of free null-homotopic loops on $X$, $L_0X\subset LX$. 
When $X$ is simply connected we have $L_0X=LX$.
The group $S^1$ naturally acts on $LX$
 by $(zf)(u)=f(z^{-1}u)$. Let $S(X)$ denote the orbit space $LX/S^1$, the space of  'strings'. 
Note that $L_0X$ is $S^1$-invariant and denote by
$S_0(X)$ the orbit space $L_0X/S^1$, the space of null-homotopic 'strings'
on $X$. 

There is the evaluation  fibration
$ev:LX\to X$, $ev(\phi:S^1\to X)=\phi(1)$. Note that  $ev$ admits a section $s:X\to LX$ and the fiber of $ev$
is the loop space $\Omega X$.

We note that $L$,  $L_0$, $S$, and $S_0$ are covariant functors on the category of topological spaces.
Each of these functors $F$ takes a homotopy to a homotopy in a sense that there is a natural embedding
$F(X)\times I\to F(X\times I)$.

Suppose that $X$ is  a proper metric space which uniformly locally path connected, i.e. there are $\epsilon>0$ and a continuous map $\Phi:W_{\epsilon}\times[0,1]\to X$ of the $\epsilon$-neighborhood
$W_{\epsilon}$ of the diagonal $\Delta(X)\subset X\times X$ such that $\Phi(x,y,-):[0,1]\to X$ is a path from $x$ to $y$ and $\Phi(x,x,-)$ 
is a constant path for all $(x,y)\in W_{\epsilon}$.
Then  $L_0X$ is uniformly locally path connected by means of the function $\Wi\Phi:\Wi W_{\epsilon}\times[0,1]\to L_0X$ defined
as $\Wi\Phi(f_1,f_2,t)(u)=\Phi(f_1(u),f_2(u),t)$ where $\Wi W_{\epsilon}$ is the epsilon neighborhood of the diagonal $\Delta(L_0X)$ in the uniform metric.
We note that $\Wi W_{\epsilon}$ is invariant with respect to the diagonal $S^1$-action and the map $\Wi\Phi$ is equivariant. Then the space
$S_0(X)$ is locally contractible. Moreover, the space $S_0(X)/X$ is locally contractible. Note that for such spaces the singular cohomology behaves well
and agrees with the \v Cech cohomology~\cite{Bre}.

We note that $X$ is embedded in $LX$ as well in $S_0X$ via the constant loops. Let $j_X:X\to S_0(X)$ denote that embedding.
Note that  $j_X:1\to S_0$ is a natural transformation of functors.
\begin{prop}
For any space $X$
$$
SX/X=(LX/X)/S^1\ \ \ \ \ \text{and}\ \ \ \ S_0X/X=(L_0X/X)/S^1
$$
where we identify $X$ with $j_X(X)$.
\end{prop}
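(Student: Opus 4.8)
The plan is to recognize both sides as quotients of the free loop space $LX$ by one and the same equivalence relation, and then to invoke the fact that a composition of quotient maps is again a quotient map, so that each side carries the quotient topology induced from $LX$ and is therefore uniquely determined by that relation.

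First I would record the geometric input that makes the two orders of quotienting compatible: constant loops are fixed by the rotation action, so $X$ lies in the fixed-point set of the $S^1$-action on $LX$. Consequently $j_X:X\to SX$ is injective and its image $j_X(X)$ is exactly the image $q(X)$ of the constant loops under the orbit map $q:LX\to LX/S^1=SX$. Moreover the orbit $S^1 f$ of a loop $f$ meets $X$ if and only if $f$ itself is constant, since $zf$ is constant precisely when $f$ is; thus $q(f)\in j_X(X)$ exactly when $f\in X$.

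Next I would make the two equivalence relations explicit. Write $q:LX\to SX$ and $p:LX\to LX/X$ for the two quotient maps, and let $\pi_1:SX\to SX/X$ collapse $j_X(X)$ and $\pi_2:LX/X\to (LX/X)/S^1$ pass to the orbit space of the descended action (the action descends because $X$ is invariant). For the composite $\pi_1\circ q$, two loops $f,g$ have the same image iff they lie in a common orbit or both $q(f),q(g)$ lie in $j_X(X)$, i.e. by the previous paragraph iff $f,g$ are in a common orbit or both are constant. For the composite $\pi_2\circ p$, two loops have the same image iff $p(f)$ and $p(g)$ lie in a common orbit of $LX/X$; since the collapsed point is a fixed point and $p$ is injective off $X$, this holds iff both $f,g\in X$ or $f=zg$ for some $z\in S^1$. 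Hence the two relations coincide.

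Finally, since $\pi_1\circ q$ and $\pi_2\circ p$ are each compositions of quotient maps they are quotient maps, so $SX/X$ and $(LX/X)/S^1$ are both the quotient of $LX$ by this single relation and are therefore canonically homeomorphic; under this identification the image of $X$ is the common collapsed point, which is the asserted identification of $X$ with $j_X(X)$. The same argument applies verbatim with $LX$ replaced by the $S^1$-invariant subspace $L_0X$ (constant loops are null-homotopic, hence lie in $L_0X$), giving the second identity. The only point demanding care is the verification that the two iterated quotient topologies agree, which I expect to handle by the permanence of quotient maps under composition rather than by any direct manipulation of open sets; the equality of the underlying equivalence relations is then a short set-theoretic check.
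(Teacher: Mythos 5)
Your proposal is correct and follows essentially the same route as the paper: both arguments reduce to the observation that the two iterated quotients of $LX$ (respectively $L_0X$) identify exactly the same points, and both settle the topology by using that a composition of quotient maps is again a quotient map, so the two spaces carry the same quotient topology and are canonically homeomorphic. Your explicit verification that the two equivalence relations coincide is a welcome expansion of what the paper dismisses as obvious, but it is not a different method.
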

\begin{proof}
This is an obvious statement on the set-theoretical level: Both spaces consists of orbits of non-constant loops $f:S^1\to X$ plus the special point $\ast=\{X\}$.
The rest of the proof is an exercise on the quotient topology which we feel obligated to do since the spaces involved are not compact. 

We recall that the main theorem on quotient map which states that for a quotient map $p:X\to Y$ and a continuous map $g:X\to Z$ constant on each point preimage $p^{-1}(y)$ there is a continuous map $f:Y\to Z$ such that $f\circ p=g$. We apply it to the quotient map $p:LX/X\to (LX/X)/S^1$ and $g:LX/X\to SX/X$ t and to the quotient map $LX\to SX\to SX/X$ and a map $LX\to LX/X\to (LX/X)/S^1$ to obtain a map $f:(LX/X)/S^1\to SX/X$ and its inverse $f^{-1}:SX/X\to (LX/X)/S^1$.
Here we used the fact that the composition of quotient maps is a quotient map.
\end{proof}
We consider the Borel construction~\cite{Bo} for the $S^1$ action on $L_0X/X$ and on $S^{2N+1}$ for sufficiently large $N$:
$$
\begin{CD}
S^{2N+1} @<<< S^{2N+1}\times(L_0X/X) @>>> L_0X/X\\
@VVV @VVV @VVV\\
\C P^N @< p_1<< S^{2N+1}\times_{S^1}(L_0X/X) @>p_2>> S_0(X)/X .\\
\end{CD}
$$
Every orbit $S^1z\in S_0(X)/X$ is homeomorphic to to $S^1/G_z$ where $G_z\subset S^1$ is the stabilizer of $z$, a closed subgroup of $S^1$.
Then the fiber $p_2^{-1}(z)$ is homeomorphic to $S^{2N+1}/G_z$.
We note that the special fiber $F=p_2^{-1}(\{X\})$ of $p_2$ is homeomorphic to $\C P^N$. We denote by $$E(N,X)= (S^{2N+1}\times_{S^1}(L_0X/X))/F$$
the quotient space and by $\bar p_2:E(N,X)\to S_0(X)/X$ the induced map. Thus $p_2=\bar p_2\circ q$ where $q$ is the map collapsing $F$.
\begin{prop}\label{Borel} 
For a uniformly locally path connected proper metric space $X$ the projection $\bar p_2$ induces an isomorphism
$$
(\bar p_2)_*:H_i(E(N,X);\Q) \to H_i(S_0(X)/X;\Q)
$$
in dimensions $i\le 2N$.
\end{prop}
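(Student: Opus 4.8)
The plan is to apply a Vietoris--Begle type mapping theorem to $\bar p_2$, exploiting that, rationally, every fiber is as highly connected as $S^{2N+1}$. First I would identify the fibers of $\bar p_2$. Over the special point $\ast=\{X\}\in S_0(X)/X$ the fiber is a single point, since by construction $E(N,X)$ is obtained from the Borel space by collapsing $F=p_2^{-1}(\ast)\cong\C P^N$, and $q$ is a homeomorphism away from $F$. Over any other string $[w]$, represented by a non-constant loop, the stabilizer $G_w\subset S^1$ is a proper closed subgroup, hence a finite cyclic group $\Z/k$ acting on $S^{2N+1}$ by the restriction of the free Hopf action; thus the fiber $\bar p_2^{-1}([w])=S^{2N+1}/G_w$ is a lens space. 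Its rational homology is that of $S^{2N+1}$, so its reduced rational \v Cech cohomology vanishes in every degree $q$ with $0<q\le 2N$.

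Next I would verify the point--set hypotheses. Although these total spaces are infinite-dimensional loop--space quotients and need not be locally compact, closedness of $\bar p_2$ can still be extracted from the compactness of the factor $S^{2N+1}$ and of the group $S^1$: the projection $S^{2N+1}\times(L_0X/X)\to L_0X/X$ is closed because $S^{2N+1}$ is compact, the orbit map $L_0X/X\to S_0(X)/X$ is closed because $S^1$ is compact, and closedness of the composite then descends through the surjective quotient maps $S^{2N+1}\times(L_0X/X)\to S^{2N+1}\times_{S^1}(L_0X/X)\to E(N,X)$ to $\bar p_2$ (if $g=h\circ\pi$ with $g$ closed and $\pi$ a surjection, then $h$ is closed). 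With $\bar p_2$ a closed surjection onto the metrizable, hence paracompact, space $S_0(X)/X$, and with the fiber computation above, the Vietoris--Begle mapping theorem in \v Cech cohomology with $\Q$ coefficients yields that
$$
\bar p_2^{\,*}:\check H^i(S_0(X)/X;\Q)\to\check H^i(E(N,X);\Q)
$$
is an isomorphism for $i\le 2N$. Equivalently, in the Leray spectral sequence of $\bar p_2$ the sheaves $\mathcal H^q(\bar p_2;\Q)$ vanish for $0<q\le 2N$ while $\mathcal H^0=\Q$, so the bottom row $H^i(S_0(X)/X;\Q)$ survives unaltered through dimension $2N$.

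Finally I would transfer this to singular homology. Since $X$ is uniformly locally path connected, the spaces $S_0(X)/X$ and $E(N,X)$ are locally contractible, so their \v Cech and singular cohomologies agree (as already noted in the excerpt); in particular the displayed isomorphism holds in singular cohomology. Working over the field $\Q$, singular cohomology is the vector-space dual of singular homology under the Kronecker pairing, and $\bar p_2^{\,*}$ is the dual of $(\bar p_2)_*$; since a linear map of $\Q$-vector spaces is an isomorphism precisely when its dual is, dualizing gives that $(\bar p_2)_*:H_i(E(N,X);\Q)\to H_i(S_0(X)/X;\Q)$ is an isomorphism for $i\le 2N$.

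The step I expect to be the main obstacle is the passage between the \v Cech-cohomological statement, where Vietoris--Begle lives naturally, and the singular-homology statement of the proposition. This rests on the local contractibility of the spaces involved, which near the collapsed point of $E(N,X)$ is not automatic and must be produced from the uniform local path-connectedness of $X$ via the equivariant map $\Wi\Phi$ built from $\Phi$; verifying that the collapse of $F$ does not destroy local contractibility, and hence that singular theory may legitimately replace \v Cech theory and be dualized over $\Q$, is the delicate point. Once this local structure is secured, the fiber identification and the spectral-sequence bookkeeping are routine.
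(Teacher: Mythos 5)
Your proposal is correct and takes essentially the same route as the paper: the paper likewise runs the Leray spectral sequence of the proper map $\bar p_2$ (equivalently, a Vietoris--Begle argument), observes that the exceptional fiber is a point and all other fibers are lens spaces or spheres with vanishing reduced rational cohomology in degrees $0<q\le 2N$, and then transfers the resulting cohomological isomorphism to rational homology using the local niceness of the spaces. Your extra care about closedness of $\bar p_2$ and about local contractibility near the collapsed fiber only makes explicit what the paper leaves implicit.
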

\begin{proof}
Since the spaces here are locally nice, it suffices to prove the statement for the cohomology. We consider the Leray spectral sequence of $\bar p_2$.
Since the map $\bar p_2$ is proper, the sequence converges to $H^*(E(N,X);\Q)$~\cite{Bre}. Note that $\bar p_2$ has one exceptional fiber
which is a point. All other fibers are  lens spaces $L^{2N+1}_m$ possibly degenerated to the sphere $S^{2N+1}$. Therefore,  in the Leray spectral sequence we have trivial stalks 
$$\mathcal H^q(\bar p_2)=\lim_\rightarrow H^q(\bar p_2^{-1}(U);\Q)=H^q(\bar p_2^{-1}(x);\Q)=0$$ for $0<q\le 2N$. Hence, $E_2^{p,q}=0$ 
for $0<q\le 2N$ and the result follows.
\end{proof}

\begin{prop}\label{sphere} Let $X=B\pi/B\pi^{(n-1)}$ and $H_{n+1}(B\pi;\Q)=0$, $n\ge 4$. Then $H_n(LX/X;\Q)= 0$.
\end{prop}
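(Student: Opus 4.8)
The plan is to exploit the high connectivity of $X$ and the evaluation fibration, reducing the computation to the rational homology of $B\pi$ in degree $n+1$, where the hypothesis can be applied directly.

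First I would record the structural facts. Collapsing the $(n-1)$-skeleton kills all homology below degree $n$, so $X=B\pi/B\pi^{(n-1)}$ is $(n-1)$-connected; since $n\ge 4$ it is in particular simply connected, whence $L_0X=LX$ and the constant-loop inclusion $j_X\colon X\to LX$ is a genuine section of the evaluation fibration $ev\colon LX\to X$ with fiber $\Omega X$. The section splits the long exact sequence of the good pair $(LX,X)$, giving $H_*(LX;\Q)\cong H_*(X;\Q)\oplus H_*(LX,X;\Q)$; since $n\ge 1$ one has $H_n(LX/X;\Q)=\widetilde H_n(LX/X;\Q)=H_n(LX,X;\Q)$, so it suffices to show this relative group vanishes.

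Next I would run the rational homology Serre spectral sequence $E^2_{p,q}=H_p(X;H_q(\Omega X;\Q))\Rightarrow H_{p+q}(LX;\Q)$. Because $X$ is $(n-1)$-connected and $\Omega X$ is $(n-2)$-connected, the only possibly nonzero terms on the diagonal $p+q=n$ are $E^2_{n,0}=H_n(X;\Q)$ and $E^2_{0,n}=H_n(\Omega X;\Q)$, the row $q=0$ being exactly the part carried by the section. The existence of the section forces $ev_*$ to be surjective, hence the bottom row survives to $E^\infty$ and every differential out of it vanishes; in particular the only differential that could touch $E_{0,n}$, namely $d^{n+1}\colon E^{n+1}_{n+1,0}\to E^{n+1}_{0,n}$, is zero, while the potential $d^{n}$ from $E_{n,1}$ is zero since $H_1(\Omega X;\Q)=0$. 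Therefore $E^\infty_{0,n}=E^2_{0,n}$ and $H_n(LX,X;\Q)\cong H_n(\Omega X;\Q)$. It then remains to compute the latter. I would invoke the rational Hurewicz theorem twice: since $X$ is $(n-1)$-connected, $\pi_{n+1}(X)\otimes\Q\cong H_{n+1}(X;\Q)$ (the iso range $n+1\le 2n-2$ holds for $n\ge 3$), and since $\Omega X$ is $(n-2)$-connected, $\pi_n(\Omega X)\otimes\Q\cong H_n(\Omega X;\Q)$ (the iso range $n\le 2n-4$ holds precisely for $n\ge 4$, which is where the hypothesis on $n$ enters). Combined with $\pi_n(\Omega X)\cong\pi_{n+1}(X)$ this gives $H_n(\Omega X;\Q)\cong H_{n+1}(X;\Q)$. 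Finally, from the cofiber sequence $B\pi^{(n-1)}\to B\pi\to X$ and the vanishing $H_i(B\pi^{(n-1)};\Q)=0$ for $i\ge n$, the connecting maps yield $H_{n+1}(X;\Q)\cong H_{n+1}(B\pi;\Q)$, which is $0$ by assumption. Chaining the isomorphisms gives $H_n(LX/X;\Q)=0$.

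The main obstacle is the spectral-sequence bookkeeping in the third step: one must check carefully that no differential corrupts the identification $H_n(LX,X;\Q)\cong H_n(\Omega X;\Q)$, and in particular that the section genuinely kills the incoming $d^{n+1}$ from the surviving bottom-row term $E^2_{n+1,0}=H_{n+1}(X;\Q)$. The two connectivity hypotheses together with $n\ge 4$ are exactly what force all competing terms on the relevant diagonals to vanish and make the two Hurewicz isomorphisms available.
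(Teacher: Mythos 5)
Your argument is correct and follows essentially the same route as the paper: the Serre spectral sequence of the evaluation fibration $ev\colon LX\to X$ with its section isolates $H_n(LX/X;\Q)\cong H_n(\Omega X;\Q)$, and then $H_n(\Omega X;\Q)\cong H_{n+1}(X;\Q)\cong H_{n+1}(B\pi;\Q)=0$. The only cosmetic difference is that you invoke the rational Hurewicz theorem twice where the paper routes the identification through rational stable homotopy groups ($\pi_*^s\otimes\Q$); these are the same computation, and your explicit tracking of the ranges $n+1\le 2n-2$ and $n\le 2n-4$ makes the role of the hypothesis $n\ge 4$ clearer than in the original.
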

\begin{proof} 
Since $X$ is $(n-1)$-connected and $\Omega(X)$ is $(n-2)$-connected  in the homology Leray-Serre spectral sequence of $ev:LX\to X$ we have  $E^2_{p,q}=0$  for $0<q<n-1$ and 
$E^2_{p,0}=0$ for $0<p<n$. This implies that $H_n(LX)$ is defined by $E^2_{0,n}=H_n(X)$ and $E^2_{n,0}$. 
Since $ev$ has a section, it implies that both terms live to $E^\infty$. 
The splitting $H_n(LX)= H_n(X)\oplus H_n(LX/X)$ generated by the section $s$
implies that $H_n(LX/X;\Q)=H_n(\Omega X;\Q)$. Note that $H_n(\Omega X;\Q)=\pi^s_n(\Omega X)\otimes\Q=\pi_n(\Omega X)\otimes\Q=\pi_{n+1}(X)\otimes\Q=\pi^s_{n+1}(X)\otimes\Q=H_{n+1}(X;Q)=H_{n+1}(B\pi;\Q)=0$.
\end{proof}

\begin{prop}\label{Leray}
Let $\phi_{\ast,\ast}:E_{\ast,\ast}(p')\to E_{\ast,\ast}(p)$ be the morphism of the Leray-Serre spectral sequences
generated by a morphism $(f',f):p'\to p$ of fiber bundles, $p':X'\to Y'$ and $p:X\to Y$. 

(a) If $\phi^2_{0,n}:E_{0,n}^2(p')\to E_{0,n}^2(p)$ is surjective then also is $\phi^\infty_{0,n}:E^\infty_{0,n}(p')\to E_{0,n}^\infty(p)$.

(b) If all $\phi^{\infty}_{k,l}:E_{k,l}^{\infty}(p')\to E^{\infty}_{k,l}(p)$ for $k+l=n$ are surjective,
then $f'_*:H_n(X')\to H_n(X)$ is surjective.

\end{prop}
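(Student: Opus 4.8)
The plan is to argue entirely at the level of the spectral sequences, using only formal properties of the homological Leray--Serre spectral sequence together with the fact that the bundle morphism $(f',f)$ induces a morphism $\phi$ that commutes with every differential and that converges to the filtration-preserving map $f'_*$ on homology. Thus I would not need any geometry beyond what is already packaged into the statement.

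For part (a) the key observation is that the column $p=0$ of a first-quadrant homological spectral sequence only ever loses elements. Indeed, the differential leaving $E^r_{0,n}$ lands in $E^r_{-r,n+r-1}=0$, so each page-to-page map $E^r_{0,n}\to E^{r+1}_{0,n}$ is the quotient by the image of the incoming differential $d^r\colon E^r_{r,n-r+1}\to E^r_{0,n}$. Hence the canonical map $E^2_{0,n}\to E^\infty_{0,n}$ is an epimorphism, for both $p$ and $p'$. First I would write down the commutative square whose two vertical arrows are these quotient epimorphisms and whose horizontal arrows are $\phi^2_{0,n}$ and $\phi^\infty_{0,n}$. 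Since the right-hand vertical map is surjective and, by hypothesis, so is $\phi^2_{0,n}$, the composite $E^2_{0,n}(p')\to E^\infty_{0,n}(p)$ along the top and then down the right is surjective; as this composite factors through $\phi^\infty_{0,n}$, the latter is surjective as well.

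For part (b) I would invoke convergence: the spectral sequence of $p$ endows $H_n(X)$ with a finite filtration $0=F_{-1}\subseteq F_0\subseteq\cdots\subseteq F_n=H_n(X)$ whose successive quotients are $F_k/F_{k-1}\cong E^\infty_{k,n-k}$, and likewise for $p'$. Because $(f',f)$ is a bundle morphism, $f'_*$ preserves these filtrations and the induced maps on the associated graded pieces are precisely the $\phi^\infty_{k,n-k}$. I would then run the standard induction on $k$: assuming $f'_*$ carries $F_{k-1}H_n(X')$ onto $F_{k-1}H_n(X)$, and given the surjectivity of $\phi^\infty_{k,n-k}$ on the $k$-th graded quotient, a short diagram chase (lift a class in the graded quotient, then correct the resulting error in $F_{k-1}H_n(X)$ using the inductive hypothesis) shows $f'_*$ carries $F_kH_n(X')$ onto $F_kH_n(X)$. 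Taking $k=n$ gives the surjectivity of $f'_*$.

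The arguments are formal once convergence and filtration-compatibility are in hand, so the only point demanding care is the homological, rather than cohomological, bookkeeping. One must verify that in the $p=0$ column the page maps are quotients and not inclusions, which is exactly what makes the edge comparison in (a) go through, and that the filtration indices in (b) are arranged so that the associated graded is read off from the $E^\infty_{k,l}$ with $k+l=n$. I expect no genuine obstacle beyond this indexing, since a first-quadrant homological spectral sequence converges automatically and a morphism of fiber bundles realizes $f'_*$ by a filtered chain map on singular chains.
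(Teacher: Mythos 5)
Your proposal is correct and follows essentially the same route as the paper: for (a) you use that the $p=0$ column of a first-quadrant homological spectral sequence is only ever quotiented, which is exactly the content of the paper's induction on $r$, and for (b) you run the epimorphism five-lemma/diagram chase along the filtration of the abutment, just phrased via the submodules $F_k$ instead of the paper's quotient extensions $A_i$. No gaps.
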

\begin{proof} 
(a) We show by induction on $r$ that $\phi^r_{0,n}$ is surjective.
Indeed, since in the commutative diagram
$$
\begin{CD}
@>d^r>> E^r_{0,n}(p')@>>> E^{r+1}_{0,n}(p') @>>> 0\\
@. @V\phi^r_{0,n}VV @V\phi^{r+1}_{0,n}VV @.\\
@>d^r>> E^r_{0,n}(p)@>\psi>> E^{r+1}_{0,n}(p) @>>> 0\\
\end{CD}
$$
$\phi^r_{0,n}$ and $\psi$ are surjective, so is $\phi^{r+1}_{0,n}$.

(b) The homology groups $H_n(X)$ is obtained from $E^{\infty}_{n,0}(p)$ by consecutive extensions
by $E^{\infty}_{n-1,1}(p)$ then by $E^{\infty}_{n-2,2}(p)$ and so on up to $E^{\infty}_{0,n}$. The same holds true
for $H_n(X')$. Let $A_0= E^{\infty}_{n,0}(p)$, $A_0'= E^{\infty}_{n,0}(p')$ and let $A_i$ and $A_i'$ denote the corresponding intermediate extensions, $i=1,\dots, n$. Thus,
$H_n(X)=A_{n}$ and $H_n(X')=A_{n}'$.
We apply the epimorphism version of Five Lemma recursively to the diagram
$$
\begin{CD}
0 @>>> E^{\infty}_{n-i,i}(p') @>>> A_i ' @>>> A_{i-1}'@>>> 0\\
@.@VVV @VVV @VVV @.\\
0 @>>> E^\infty_{n-i,i}(p) @>>> A_i @>>> A_{i-1}@>>> 0\\
\end{CD}
$$
to obtain the result.
\end{proof}

The main result of this section, Theorem~\ref{strings}, is rather technical. It is a computation of the string homology in some special case. Since we are doing it by rather elementary means we don't involve here the loop homology and the string homology theories~\cite{CJY},\cite{We}.

\begin{thm}\label{strings}
Suppose that a group $\pi$ has a locally finite CW complex as a classifying space $B\pi$. 
Then the inclusion 

(a) $j_\pi:B\pi\to S_0(B\pi)$ induces an isomorphism 
$$(j_{\pi})_*:H_*(B\pi;\Q)\to H_*(S_0(B\pi);\Q)$$ of the rational homology groups.

(b) Suppose that  $H_{n+1}(B\pi;\Q)=0$, $n\ge 4$. Then the inclusion homomorphism $$H_n(S_0(B\pi^{(n-2)});\Q)\to H_n(S_0(B\pi));\Q)$$ is  zero.
\end{thm}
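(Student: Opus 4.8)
The plan is to treat the two parts separately, with the Borel construction of \propref{Borel} as the main engine in both.

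For part (a) I would first note that for $X=B\pi$ the evaluation fibration $ev\colon L_0X\to X$ has contractible fiber, namely the null-homotopic component of $\Omega B\pi\simeq\pi$; since $ev$ carries the constant-loop section, $ev$ is a homotopy equivalence, so the constant-loop inclusion $B\pi\hookrightarrow L_0B\pi$ is a rational homology equivalence and $L_0B\pi/B\pi$ has trivial reduced rational homology. Feeding this into the Borel square, the fibration $p_1\colon S^{2N+1}\times_{S^1}(L_0B\pi/B\pi)\to\C P^N$ has rationally acyclic fiber, so by its Serre spectral sequence the total space has the rational homology of $\C P^N$; the special fiber $F=\C P^N$ sits inside as the section over the collapsed basepoint, whence $H_*(F;\Q)\to H_*(S^{2N+1}\times_{S^1}(L_0B\pi/B\pi);\Q)$ is an isomorphism. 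Therefore $E(N,B\pi)=(\,\cdot\,)/F$ has trivial reduced rational homology in degrees $\le 2N$, and \propref{Borel} gives $\widetilde H_i(S_0B\pi/B\pi;\Q)=0$ for $i\le 2N$. Letting $N\to\infty$ yields $\widetilde H_*(S_0B\pi/B\pi;\Q)=0$, which is exactly that $(j_\pi)_*$ is an isomorphism.

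For part (b), write $Y=B\pi^{(n-2)}$ and $X=B\pi$. The first step is a dimension reduction: since $\dim Y=n-2$ and $n\ge4$, one has $H_n(Y;\Q)=H_{n-1}(Y;\Q)=0$, so the long exact sequence of the pair $(S_0Y,Y)$ collapses to an isomorphism $H_n(S_0Y;\Q)\xrightarrow{\ \cong\ }H_n(S_0Y/Y;\Q)$, and \propref{Borel} identifies the target with $\widetilde H_n(E(N,Y);\Q)$. On the other side part (a) gives $H_n(S_0X;\Q)\cong H_n(X;\Q)$; moreover the quotient map $L_0X\to S_0X$ is a rational homology isomorphism in degree $n$ and $ev\colon L_0X\to X$ is a homotopy equivalence, so every class of $H_n(S_0X;\Q)$ is detected on constant loops. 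The plan is then to analyze $i_*$ through the Borel spectral sequence of $L_0Y/Y$ over $\C P^N$, whose rows are $H_p(\C P^N;\Q)\otimes\widetilde H_q(L_0Y/Y;\Q)$.

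The $p=0$ part is the image of $H_n(L_0Y;\Q)$ under the quotient map, and here the argument is clean: naturality of evaluation gives $ev_X\circ L_0(i)=i\circ ev_Y$, and since $H_n(Y;\Q)=0$ the composite $H_n(L_0Y;\Q)\to H_n(L_0X;\Q)\cong H_n(X;\Q)$ factors through $H_n(Y;\Q)=0$; as $ev_X$ is an isomorphism, $L_0(i)_*=0$ in degree $n$ and the whole loop part of $i_*$ vanishes. The main obstacle is the residual, genuinely $S^1$-equivariant part coming from $p\ge2$, i.e. from $\widetilde H_{n-p}(L_0Y/Y;\Q)$ tensored with powers of the Euler class: these Gysin classes are present in $H_n(S_0Y;\Q)$ but have no counterpart in $H_n(S_0X;\Q)\cong H_n(X;\Q)$, and because $S_0X/X$ is rationally trivial the relative comparison detects nothing, so their absolute image must be controlled directly. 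This is where I expect to invoke \propref{sphere}: collapsing the $(n-1)$-skeleton, $q\colon B\pi\to W=B\pi/B\pi^{(n-1)}$, \propref{sphere} gives $H_n(LW/W;\Q)=0$, equivalently that $ev_W\colon H_n(LW;\Q)\to H_n(W;\Q)$ is an isomorphism; together with the injectivity of $q_*\colon H_n(B\pi;\Q)\to H_n(W;\Q)$ (immediate from cellular chains, since $W$ kills the lower skeleton) and the fact that $q\circ i$ is null, this should force the residual image to vanish. Making the bookkeeping of these equivariant classes precise—matching the Borel spectral sequences for $Y$, $X$ and $W$ and checking that \propref{sphere} exactly annihilates the surviving $E^\infty_{p,n-p}$—is the step I expect to be delicate, and is presumably where \propref{Leray} enters to propagate the needed surjectivity.
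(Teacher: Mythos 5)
Part (a) of your proposal is correct and is essentially the paper's own argument: the contractibility of $L_0B\pi/B\pi$, the Borel construction over $\C P^N$, and Proposition~\ref{Borel}, with $N\to\infty$. No issues there.

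For part (b), your reduction is valid and in fact more streamlined than the paper's: by part (a) every class in the image of $S_0(i)_*$ is $j_{\pi*}(a)$ for some $a\in H_n(B\pi;\Q)$; since $q\circ i\colon B\pi^{(n-2)}\to W=B\pi/B\pi^{(n-1)}$ is constant, $S_0(q)_*S_0(i)_*=0$, so $j_{W*}(q_*(a))=0$; as $q_*$ is injective on $H_n$ (your cellular argument is fine), the whole theorem reduces to the single claim that $j_{W*}\colon H_n(W;\Q)\to H_n(S(W);\Q)$ is injective. (Note this also makes your separate treatment of the ``$p=0$ part'' redundant.) The paper's route is parallel but passes instead through $B\pi/B\pi^{(n-2)}$ via the pair $(S_0(B\pi),S_0(B\pi^{(n-2)}))$.

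The genuine gap is that you never prove this injectivity, and the tool you point to, Proposition~\ref{sphere}, cannot do it by itself. $H_n(LW/W;\Q)=0$ only says that nothing new is created in passing from $W$ to $LW$; the danger is in the further passage from $LW$ to $SW=LW/S^1$, where classes of $H_n(W;\Q)$ could be killed. By the sequence of the pair $(SW,W)$, injectivity of $j_{W*}$ is equivalent to surjectivity of $H_{n+1}(SW;\Q)\to H_{n+1}(SW/W;\Q)$, and by Proposition~\ref{Borel} the target is computed by the fibration over $\C P^N$ with fiber $LW/W$, whose $(n+1)$-antidiagonal contains the term $H_2(\C P^N;\Q)\otimes H_{n-1}(LW/W;\Q)$ with $H_{n-1}(LW/W;\Q)\cong\pi_n(W)\otimes\Q\cong H_n(W;\Q)$ --- exactly the group you are trying to protect, and untouched by Proposition~\ref{sphere}, which controls $H_n(LW/W)$, not $H_{n-1}$. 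Showing that these Gysin-type classes are hit from $H_{n+1}(SW;\Q)$ is the entire second half of the paper's proof: the comparison of the Borel fibrations for $LX$ and $LX/X$ over $\C P^N$ via Proposition~\ref{Leray}, using sections and the vanishing supplied by Proposition~\ref{sphere} to isolate the surviving columns. This is precisely the step you defer as ``delicate bookkeeping,'' and without it the argument does not close.
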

\begin{proof} 
(a) We show that $H_*(S_0(B\pi),j_{\pi}(B\pi);\Q)=0$.

Note that the  fiber  $\Omega(B\pi)$ of the evaluation fibration $ev:L(B\pi)\to B\pi$  is homotopy equivalent to $\pi$. This fibration admits a section $s$ and the path component of $s(B\pi)$ is exactly $L_0(B\pi)$. Thus, the restriction $p_0$ of $ev$ to $L_0(B\pi)$ is a fibration with homotopy trivial fiber. Hence the inclusion
$B\pi\to L_0(B\pi)$ is a homotopy equivalence. Thus, $L_0(B\pi)/B\pi$ is contractible. 
Therefore the projection in the Borel construction $$p_1:S^{2N+1}\times_{S^1}(L_0(B\pi)/B\pi)\to\C P^{N}$$ induces isomorphism of homology groups. 
By Proposition~\ref{Borel} the projection $$\bar p_2:(S^{2N+1}\times_{S^1}(L_0(B\pi)/B\pi)/F\to S_0(B\pi)/j_\pi(B\pi)$$ induces isomorphisms of rational homology in dimensions $\le 2N$. The exceptional fiber $F$ of $p_2$ is homeomorphic to $\C P^{N}$. It defines a section of $p_1$. Thus collapsing $F$ to a point  defines a contractible space. Therefore, $H_i(S_0(B\pi),j_{\pi}(B\pi);\Q)=0$ for $i\le 2N$. Since $N$ is arbitrary, the result follows.

(b) In view of the exact sequence of the
pair $(S_0(B\pi),S_0(B\pi^{(n-2)}))$ it suffices to show that the homomorphism $$q_*:H_n(S_0(B\pi);\Q)\to H_n(S_0(B\pi)/S_0(B\pi^{(n-2)});\Q)$$
generated by the collapsing map $$q:S_0(B\pi)\to S_0(B\pi)/S_0(B\pi^{(n-2)})$$ is injective. 
We note that the collapsing map $\psi:B\pi\to B\pi/B\pi^{(n-2)}$ induces an equivariant map $L_0(\psi):L_0(B\pi)\to L_0(B\pi/B\pi^{(n-2)})$ and, hence, it defines a map $s(\psi):S_0(B\pi)\to S_0(B\pi/B\pi^{(n-2)})$. Note that the map $s(\psi)$ factors through $q$, $s(\psi)=\psi'\circ q$.
Thus, it suffices to show that $s(\psi)$ is injective for $n$-homology. Since $(j_\pi)_*$ and $i$ are isomorphisms in the commutative diagram
$$
\begin{CD}
H_n(B\pi/B\pi^{(n-2)};\Q) @>j'_*>> H_n(S_0(B\pi/B\pi^{(n-2)});\Q)\\
@AiAA @As(\psi)AA\\
H_n(B\pi;\Q) @>(j_{\pi})_*>> H_n(S_0(B\pi);\Q) ,\\
\end{CD}
$$
it suffices to show that $j'_*$ is a monomorphism. 

Let $X= B\pi/B\pi^{(n-2)}$. We need to show that $$j'_*:H_n(X;\Q)\to H_n(S_0(X);\Q)$$ is a monomorphism.
The exact sequence of the pair $(S_0(X),X)$  reduces  the problem to showing that the collapsing map $q:S_0(X)\to S_0(X)/X$ induces an epimorphism
$q_*:H_{n+1}(S_0(X);\Q)\to H_{n+1}(S_0(X)/X;\Q)$.

We consider the commutative diagram generated by Borel's constructions for $S^1$-action on $LX$, $LX/X$, and $S^{2N+1}$ with $N>n$. 
$$
\begin{CD}
H_{n+1}(S^{2N+1}\times_{S^1}LX;\Q) @>(p_2')_*>> H_{n+1}(S_0X;\Q)\\
@V\bar q_*VV @Vq_*VV\\
H_{n+1}(S^{2N+1}\times_{S^1}(LX/X);\Q) @>(p_2)_*>> H_{n+1}((S_0X)/X;\Q) .\\
\end{CD}
$$
It suffices to show that both $(p_2)_*$ and $\bar q_*$ are surjective.
By Proposition~\ref{Borel}
the reduced projection $$ \bar p_2:(S^{2N+1}\times_{S^1}(LX/X))/p_2^{-1}(\ast)\to S_0X/X$$  induces isomorphism of the rational homology in dimensions $\le 2N$. 
Here $\ast=\{X\}$ is the orbit of the fixed point.
The projection $$p_1:S^{2N+1}\times_{S^1}(LX/X)\to \C P^{N}$$ is a locally trivial bundle with the fiber $LX/X$.  The fixed point $\ast$ of the 
$S^1$-action on $LX/X$ defines a section $s$ of $p_1$. Therefore, there is a splitting
$$
H_i(S^{2N+1}\times_{S^1}(LX/X))=H_i(\C P^N)\oplus H_i((S^{2N+1}\times_{S^1}(LX/X))/p_2^{-1}(\ast))
$$
generated by $p_1$ and the collapsing map. Hence $(p_2)_*$ is surjective.

Similarly any fixed point of the $S^1$ action on $LX$ defines a section of $p_1':S^{2N+1}\times_{S^1}(LX)\to \C P^{N}$.
We consider the morphism of the Leray-Serre spectral sequences $\pi_{\ast,\ast}:E_{\ast,\ast}(p'_1)\to E_{\ast,\ast}(p_1)$ of $p_1$ and $p_1'$ generated by $\bar q$.

By Proposition~\ref{sphere}, $H_{n-1}(LX/X;\Q)=0 $. Note that since in the definition of $X$ we collapse  the $(n-2)$-skeleton of $B\pi$ instead of the $(n-1)$-skeleton. Then  $E^2_{2,n-1}(p_1)=0$.
Therefore, nonzero elements in $E^2$-page for $p_1$ on the $(n+1)$-antidiagonal are only $E^2_{0,n+1}$ and $E^2_{n+1,0}$.
Since $H_{n+1}(LX;\Q)\to H_{n+1}(LX/X;\Q)$ is surjective, $\phi^2_{0,n+1}:E^2_{0,n}(p_1')\to E^2_{0,n}(p_1)$
is surjective. Clearly, $$\phi^2_{n+1,0}:E^2_{n+1,0}(p_1')=H_{n+1}(\C P^m;\Q)\to E^2_{n+1,0}(p_1)=H_{n+1}(\C P^m;\Q)$$ is an isomorphism.
 By Proposition~\ref{Leray} part (a), $\phi^{\infty}_{0,n+1}$
is surjective. Since both fibrations have sections, we have  stabilizations: $E^2_{n+1,0}(p_1)=E^{\infty}_{n+1,0}(p_1)$ and $E^2_{n+1,0}(p_1')=E^{\infty}_{n+1,0}(p_1')$.
Then by Proposition~\ref{Leray} part (b),  $\bar q_*$ is surjective in dimension $n+1$.
\end{proof}

\section{The main result}

Let $\nu:S^\infty\to\C P^\infty$ be the universal $S^1$-bundle. We identify
$$S^\infty=\{(E,v)\in \C P^\infty\times S^\infty\mid v\in E, |v|=1\}$$ with an $S^1$-invariant subset of $L(S^\infty)$ by sending $(E,v)$ to the rotation $f_v:S^1\to E$  with $f(1)=v$. Then $\C P^\infty$ is identified with a subset of $S(S^\infty)$. The same can be done for any principle $S^1$-bundle $p:E\to B$: The space $E$ can be identified with an invariant
subset of $L(E)$ and $B$ with a subset of $S(E)$. Let $\hat p:B\to S(E)$ denote this embedding.

By $cd(G)$ we denote the integral cohomological dimension of a group $G$. We recall that if $cd(G)\ne 2$ then
$cd(G)=\min\{\dim K(G,1)\}$~\cite{Br}.
\begin{thm}\label{main}
Suppose that in the central extension $$1\to C\to G\stackrel{p}\to H\to 1$$ where $C\cong \Z$ the group $G$ has finite $cd(G)<\infty$ and $H$ has 
trivial homology $H_n(H;\Q)$ for infinitely many $n$.
Then $H_n(BH;\Q)=0$ for all but finitely many $n$.
\end{thm}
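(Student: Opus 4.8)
The plan is to convert the central extension into a circle bundle and exploit finite cohomological dimension through the Serre/Gysin spectral sequence. The extension $1\to C\to G\to H\to 1$ with $C\cong\Z$ induces a fibration of classifying spaces $S^1=BC\to BG\to BH$; since $C$ is central this is the principal $S^1$-bundle classified by the extension class, i.e. by a map $BH\to K(\Z,2)=\C P^\infty$, and I write $e\in H^2(BH;\Q)$ for the corresponding Euler class. First I would run the rational Serre spectral sequence of this bundle. The fiber $S^1$ has cohomology only in degrees $0,1$, so the $E_2$-page has two rows and the only differential $d_2\colon E_2^{p,1}\to E_2^{p+2,0}$ is cup product with $e$. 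As $cd(G)<\infty$ we have $H^n(BG;\Q)=0$ for $n>d:=cd(G)$, and reading off the two-row sequence shows that $\cup e\colon H^n(BH;\Q)\to H^{n+2}(BH;\Q)$ is an isomorphism for every $n\ge d$. Hence $H^*(BH;\Q)$ is eventually $2$-periodic; denote by $a$ and $b$ the stable dimensions in even and odd degrees (recall $\dim H^n=\dim H_n$ over $\Q$).

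Because $H_n(H;\Q)=0$ for infinitely many $n$, the pigeonhole principle yields infinitely many $n$ of a single parity with $H^n(BH;\Q)=0$; periodicity then forces the corresponding stable value, $a$ or $b$, to vanish. So everything reduces to proving $a=b$, for then the conclusion $H_n(BH;\Q)=0$ for all $n\ge d$ follows at once. To get $a=b$ I would compute Euler characteristics. The two-row spectral sequence identifies the associated graded of $H^*(BG;\Q)$ with $\operatorname{coker}(\cup e)\oplus\ker(\cup e)[1]$, and the exact sequence $0\to\ker(\cup e)\to H^*(BH)\xrightarrow{\cup e}H^*(BH)\to\operatorname{coker}(\cup e)\to 0$ gives the Poincar\'e-series identity $(1-t^2)P_{BH}(t)=P_{\operatorname{coker}}(t)-t^2P_{\ker}(t)=:Q(t)$, a polynomial (both $\ker$ and $\operatorname{coker}$ are finite-dimensional since $\cup e$ is eventually invertible). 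Combining the two formulas yields
\[
P_{BG}(t)=(1-t^2)P_{BH}(t)+(t^2+t)P_{\ker}(t)=Q(t)+(t^2+t)P_{\ker}(t),
\]
and evaluating at $t=-1$ gives $\chi(BG)=P_{BG}(-1)=Q(-1)$, while a partial-fraction expansion of $P_{BH}=Q/(1-t^2)$ shows $Q(-1)=a-b$. Thus $\chi(BG)=a-b$.

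It remains to see that $\chi(BG)=0$. Here I would use that $BG\to BH$ is a \emph{principal} $S^1$-bundle, so $S^1$ acts freely on $BG$: for $\theta\neq 0$ the rotation $r_\theta\colon BG\to BG$ is homotopic to the identity but fixed-point free, whence by the Lefschetz fixed point theorem $\chi(BG)=L(\id)=L(r_\theta)=0$. Combining with the previous paragraph gives $a-b=0$, i.e. $a=b$, and the argument is complete.

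The main obstacle is exactly the identity $a=b$: the spectral sequence together with $cd(G)<\infty$ only produces $2$-periodicity and does \emph{not} by itself exclude the ``mixed'' situation in which, say, $H^*(BH;\Q)$ is eventually supported in even degrees (so the hypothesis is satisfied along the odd degrees while the conclusion fails). It is the vanishing of $\chi(BG)$, forced by the free circle action, that links the two parities and rules this out. Two points would need care. First, the Euler-characteristic and Lefschetz arguments require $H^*(BG;\Q)$ to be finite-dimensional in total; this holds whenever $BG$ has a finite (or finite-type, finite-dimensional) model, as in the situations relevant to the Burghelea conjecture, and I would either impose this or deduce it from the structure of $G$. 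Second, in the subcase where the infinitely many vanishing degrees can be chosen even, one avoids Euler characteristics altogether: an even vanishing in the stable range makes $e$ nilpotent, and since $\cup e^{N}=0$ while $\cup e^{N}$ is an isomorphism on the stable odd part, that part must vanish too, giving $a=b=0$ directly.
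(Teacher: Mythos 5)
Your route is genuinely different from the paper's, and part of it is sound: the two--row Serre/Gysin argument does show that $\cup e\colon H^m(BH;\Q)\to H^{m+2}(BH;\Q)$ is an isomorphism for $m\ge cd(G)$, and your closing remark disposes of one case completely and very cleanly --- if even a single even degree $2k$ has $H^{2k}(BH;\Q)=0$, then $e^k=0$ while $\cup e^k$ is an isomorphism in degrees $\ge cd(G)$, so $H^m(BH;\Q)=0$ for all $m\ge cd(G)$, both parities at once. The genuine gap is in the remaining case, where all the vanishing degrees supplied by the hypothesis are odd, and it sits exactly where you locate the ``main obstacle'': the claim $\chi(BG)=0$. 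First, the Lefschetz justification fails: the Lefschetz fixed point theorem needs a compact ANR, and on a non-compact space a fixed-point-free map homotopic to the identity can have nonzero Lefschetz number (a translation of $\R$ already does); moreover the free $S^1$-action lives on the principal-bundle model of $BG$, not on a finite complex. What you are really invoking is Gottlieb's theorem (nontrivial center forces $\chi(G)=0$), and that theorem and its known generalizations require finiteness hypotheses --- $K(G,1)$ finite or finitely dominated, or $G$ of type $FP$ over $\Q$ --- none of which follow from $cd(G)<\infty$. Indeed $cd(G)<\infty$ does not even guarantee that $H_*(BG;\Q)$ or $H_*(BH;\Q)$ is degreewise finite dimensional, so neither $\chi(BG)$ nor your Poincar\'e-series bookkeeping is defined in general. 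Worst of all, the configuration you must exclude in the all-odd case --- $H^*(BH;\Q)$ eventually $\Q$ in even degrees and $0$ in odd degrees, i.e.\ $BH$ a rational $\C P^\infty$ and $BG$ rationally acyclic --- has $\chi(BG)=1$, so asserting $\chi(BG)=0$ there is essentially asserting the conclusion; the argument is circular at the decisive point.

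The paper avoids Euler characteristics entirely. Arguing by contradiction, it picks $n\ge\max(cd(G)+2,5)$ with $H_n(BH;\Q)\ne 0$ and $H_{n+1}(BH;\Q)=0$ (such $n$ exists if the conclusion fails), observes that the principal bundle projection $p:BG\to BH$ compresses into the skeleton $BH^{(n-2)}$ because $BG$ has a model of dimension $cd(G)\le n-2$, and pushes this compression through the string functor $S_0$: the isomorphism $(j_H)_*:H_n(BH;\Q)\to H_n(S_0(BH);\Q)$ of Theorem~\ref{strings}(a) is then seen to factor through the inclusion-induced map $H_n(S_0(BH^{(n-2)});\Q)\to H_n(S_0(BH);\Q)$, which is zero by Theorem~\ref{strings}(b); hence $H_n(BH;\Q)=0$, a contradiction. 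That mechanism uses only $cd(G)<\infty$ and needs no finiteness of the homology, which is precisely what your approach is missing. If you are willing to add the hypothesis that $G$ is of type $F$ (so that Gottlieb's theorem applies and all Betti numbers are finite), your argument does go through and is more elementary than the paper's; as a proof of the theorem as stated, it does not close.
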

\begin{proof}
 Assume that 
$H_n(BH;\Q)\ne 0$ and $H_{n+1}(BH;\Q)=0$ for some $n\ge cd(G)+2$, $n\ge 5$. 
Note that the homomorphism $\phi:G\to H$ is the induced homomorphism $p_*:\pi_1(BG)\to \pi_1(BH)$
for an oriented  $S^1$-fibration $p:BG\to BH$. Since every oriented $S^1$-bundle is a principal $S^1$-bundle~\cite{Mo}, $p$
is the pull-back of the universal $S^1$-bundle
$\nu: S^\infty\to\C P^\infty$.
 The composition 
$S(p)\circ\hat p:BH\to S(BH)$
coincides with  $j_H$ where $j_H:BH\to S_0(BH)\subset S(BH)$ is the inclusion map. 
 Note that $p:BG\to BH$ can be deformed to $BH^{(n-2)}$
since $BG$ is homotopy equivalent to a complex of dimension $cd(G)\le n-2$.
Let $H:BG\times I\to BH$ be a corresponding homotopy. It defines a homotopy $\hat H:S(BG)\times I\to S(BH)$
and, hence a homotopy $\tilde H:BH\times I\to S(BH)$ of $S(p)\circ\hat p$.
Since $\tilde H(BH\times\{0\})\subset S_0(BH)$, it follows that $\tilde H(BH\times I)\subset S_0(BH)$.
Thus, $\tilde H$ is a homotopy of $j_H$ to a map $g$ with image in $S_0(BH^{(n-2)})$. Hence we have a homotopy commutative diagram
$$
\begin{CD}
BH @<id<< BH\\
@VgVV @Vj_HVV\\
S_0(BH^{(n-2)}) @>\subset>> S_0(BH)\\
\end{CD}
$$
which defines a commutative diagram of homology groups
$$
\begin{CD}
H_n(BH;\Q) @<id_*<< H_n(BH;\Q)\\
@Vg_*VV @V\cong VV\\
H_n(S_0(BH^{(n-2)});\Q) @>0>> H_n(S_0(BH);\Q).\\ 
\end{CD}
$$
The right vertical arrow is an isomorphism in view of  Theorem~\ref{strings} (a). By Theorem~\ref{strings}(b), the commutativity of the diagram implies
that $H_n(BH;\Q)=0$. This brings a contradiction.
\end{proof}

{\bf Proof of Theorem~\ref{dm}.}
The condition $cd(G)<\infty$ implies that $cd(G_x)<\infty$ for all $x\in G$. By Theorem~\ref{main} there is $N\in\N$ such that
$H_m(N_x;\Q)=0$ for $m>N$  all $x\in G$. Therefore, the short exact sequence for $T_*(x,\Q)$,
(see the introduction) implies $T_*(x,\Q)=0$. \qed

\

\end{document}